\theoremstyle{definition}
\newtheorem{thm}[equation]{Theorem}
\newtheorem{prop}[equation]{Proposition}
\numberwithin{equation}{section}
\begin{document}
\author{Kamran Alam Khan}
\address{Department of Mathematics \\
 V. R. A. L. Govt. Girls P. G. College \\
 Bareilly (U.P.)-INDIA}
\email{kamran12341@yahoo.com}
\title{Generalized normed spaces and Fixed point Theorems}
\pagestyle{myheadings}
\thispagestyle{empty}
\begin{abstract}
\noindent
 G\"{a}hler (\cite{GA1}, \cite{GA2}) introduced and investigated the notion of \textit{2-metric spaces} and \textit{2-normed spaces} in sixties. These concepts are inspired by the notion of area in two dimensional Euclidean space. In this paper, we choose a fundamentally different approach and introduce a possible generalization of usual norm retaining the distance analogue properties. This generalized norm will be called as \emph{$G$-norm}. We show that every $G$-normed space is a $G$-metric space and therefore, a topological space and develop the theory for $G$-normed spaces. We also introduce $G$-Banach spaces and obtain some fixed point theorems.
\end{abstract}

\subjclass[2010]{Primary 46B20, 47H10; Secondary 46B07, 47A30}
\keywords{Linear 2-normed space, 2-metric space,G-normed space, G-metric space, Fixed point theorem}
\maketitle

\section{INTRODUCTION}
G\"{a}hler (\cite{GA1}, \cite{GA2}) introduced and studied the concept of  \textit{2-metric spaces} and \textit{2-normed spaces} and extended the theory to \textit{$n$-normed spaces} in (\cite{GA3}, \cite{GA4}, \cite{GA5}). Since then many authors(\cite{FR1}, \cite{FR2}, \cite{GUN}, \cite{HA1}, \cite{MIS}, \cite{RIY} etc.)  have published a number of articles devoted to these concepts. It was mentioned by G\"{a}hler \cite{GA1} that the notion of a 2-metric is an extension of an idea of ordinary metric. The usual metric is a kind of generalization of the notion of distance whereas the concept of a 2-metric and hence that of a 2-norm are inspired by the notion of area in two dimensional Euclidean space and geometrically $d(x,y,z)$ represents the area of a triangle formed by the points $x$, $y$ and $z$ in $X$ as its vertices. But this is not always true. Sharma \cite{SHA} showed that $d(x,y,z)=0$ for any three distinct points $x$, $y$, $z$ $\in \mathbb{R}^2$. Also  K. S. Ha et al \cite{HA1} have shown that in many cases there is no connection between the results obtained in the usual metric spaces and 2-metric spaces.\\  
\newline \indent  B. C. Dhage \cite{DHA} attempted to generalize the concept of usual metric and introduced the concept of $D$-metric in order to translate results from usual metric space to $D$-metric space. But the topological structure of $D$-metric spaces was found to be incorrect (\cite{MU1}). Finally, Mustafa and Sims (\cite{MU2}) introduced the concept of \textit{$G$-metric} in which the tetrahedral inequality is replaced by an inequality involving repetition of indices. This new approach is fundamentally different from that of G\"{a}hler and retains the notion of distance. Recently the author (\cite{KHA}) generalized the concept to $n (\ge 3) $ variables and introduced \textit{Generalized n-metric spaces}. In this paper our aim is to generalize the concept of normed space in such a manner that the generalized norm retains the distance analogue properties of the usual norm. We call this generalized norm a \textit{$G$-norm}. We show that every $G$-normed space is a $G$-metric space and therefore, a topological space. Hence the topological concepts such as open subset, closed subset, limit, closure etc make sense. We develop the theory for $G$-normed spaces and also introduce $G$-Banach spaces. Finally we obtain some fixed point theorems.\\
Let us begin with some definitions-

\subsection{Definition} (\cite{GA2})
Let $X$ be a real linear space of dimension greater than 1 and let $\|.,.\|$ be a real valued function on $X\times X$ satisfying the following conditions:
\begin{enumerate}
\item $\|x,y\|\ge 0$ for every $x,y\in X$; $\|x,y\|=0$ if and only if $x$ and $y$ are linearly dependent,
\item $\|x,y\|=\|y,x\|$ for every $x,y\in X$,
\item $\| \alpha x,y\|=|\alpha|\, \|x,y\|$ for every $x,y\in X$ and $\alpha \in \mathbb{R}$,
\item $\|x,y+z\|\le \|x,y\|+\|x,z\|$ for every $x,y,z\in X$.
\end{enumerate}
Then the function $\|.,.\|$ is called a \textit{2-norm} on $X$ and the pair $(X,\|.,.\|)$ a \textit{linear 2-normed space}. 
\subsubsection{Example} \emph{Let $X=\mathbb{R}^3$ and $x,y\in \mathbb{R}^3$ such that $x=(x_1,x_2,x_3)$ and $y=(y_1,y_2,y_3)$. Define
\begin{equation*}
\|x,y\|=\text{max}\, \{|x_1y_2-x_2y_1|+|x_1y_3-x_3y_1|,|x_1y_2-x_2y_1|+|x_2y_3-x_3y_2|\}
\end{equation*}
Then $(\mathbb{R}^3,\|.,.\|)$ is a 2-normed space.}
\subsection{Definition} (\cite{MU2}) Let $X$ be a non-empty set, and $\mathbb{R}^+$ denote the set of non-negative real numbers. Let $G\colon X\times X\times X \to \mathbb{R}^+$, be a function satisfying:
\begin{itemize}
\item [[G 1]] $G(x,y,z)=0$ if $x=y=z$,
\item [[G 2]] $G(x,x,y)>0$ for all $x$, $y \in X$ with $x\neq y$,
\item [[G 3]] $G(x,x,y)\leq G(x,y,z)$ for all $x$,$y$,$z\in X$, 
\item [[G 4]] $G(x,y,z)=G(y,z,x)=G(x,z,y)=\cdots$  for all $x$, $y$,$z\in X$, 
\item [[G 5]] $G(x,y,z)\leq G(x,a,a)+G(a,y,z)$ for all $x$,$y$,$z$,$a\in X$.
 
\end{itemize}
Then the function $G$ is called a generalized metric, or more specifically a \emph{$G$-metric} on $X$, and the pair $(X, G) $ a \emph{$G$-metric space}. 

\subsubsection{Example} \emph{Let $\mathbb{R} $ denote the set of all real numbers. Define a function\\ $\rho\colon \mathbb{R}\times \mathbb{R}\times \mathbb{R} \to \mathbb{R}^+$, by
\begin{equation*}
\rho(x,y,z)= \text{max} \{ \left|x-y\right|,\left|y-z\right|,\left|z-x\right|\}
\end{equation*}
for all $x$,$y$,$z\in \mathbb{R}$. Then $( \mathbb{R}, \rho)$ is a $G$-metric space.}

\section{main results}

\subsection{Definition}
Let $X$ be a real vector space. A real valued function $\|.,.,.\|\colon X^3\to \mathbb{R}$ is called a \textit{$G$-norm} on $X$ if the following conditions hold:
\begin{itemize}
\item [[N 1]] $\|x,y,z\|\ge 0$ and $\|x,y,z\|=0$ if and only if $x=y=z=0$ ,
\item [[N 2]] $\|x,y,z\|$ is invariant under permutation of $x$,$y$ and $z$,
\item [[N 3]] $\|\alpha x,\alpha y,\alpha z\|=|\alpha |\,\|x,y,z\|$ for all $\alpha \in \mathbb{R}$ and $x,y,z\in X$,
\item [[N 4]] $\|x+x',y+y',z+z'\|\le \|x,y,z\|+\|x',y',z'\|$ for all $x,y,z,x',y',z'\in X$,  
\item [[N 5]] $\|x,y,z\|\ge \|x+y,0,z\|$ for all $x,y,z\in X$. 
 
\end{itemize}
The pair $(X,\|.,.,.\|)$ is then called a \textit{$G$-normed space}.

\subsubsection{Example} \emph{In the linear space $X=C[0,1]$ of real valued continuous functions on $[0,1]$ define a function $\|.,.,.\|\colon X^3 \to \mathbb{R}$ by
\begin{equation*}
\|f,g,h\|=\max_{0\le t\le 1} \{|f(t)|+|g(t)|+|h(t)|\} \qquad (f,g,h\in C[0,1]).
\end{equation*}
 Then $( X, \|.,.,.\|)$ is a $G$-normed space.}

\subsubsection*{Remark}From [N 4], we see that $\|x+x',0,0\|\le \|x,0,0\|+\|x',0,0\|$.
This is similar to triangle inequality in usual normed spaces.

\begin{prop}
\label{prop1}
\emph{Let $(X,\|.,.,.\|)$ be a $G$-normed space. Then for all $x,y,z,u,v,w\in X$, we have}
\begin{equation}
\label{equ1}
|\,\|x,y,z\|-\|u,v,w\|\,|\le \|x-u,y-v,z-w\|
\end{equation} 
\end{prop}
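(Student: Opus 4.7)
The plan is to mimic the standard normed-space proof of reverse triangle inequality, using axiom [N 4] as the triangle inequality and axiom [N 3] (with $\alpha=-1$) to justify changing the sign inside the $G$-norm.

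First I would write $(x,y,z)$ as a sum: $x=(x-u)+u$, $y=(y-v)+v$, $z=(z-w)+w$. Applying [N 4] directly gives
\begin{equation*}
\|x,y,z\|\le \|x-u,y-v,z-w\|+\|u,v,w\|,
\end{equation*}
so that $\|x,y,z\|-\|u,v,w\|\le \|x-u,y-v,z-w\|$. Symmetrically, decomposing $(u,v,w)$ as $(u-x)+x$, etc., yields
\begin{equation*}
\|u,v,w\|-\|x,y,z\|\le \|u-x,v-y,w-z\|.
\end{equation*}

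The only remaining point is that the two right-hand sides agree. For this I would invoke [N 3] with $\alpha=-1$, which gives $\|-a,-b,-c\|=\|a,b,c\|$ for all $a,b,c\in X$; in particular
\begin{equation*}
\|u-x,v-y,w-z\|=\|x-u,y-v,z-w\|.
\end{equation*}
Combining the two inequalities then produces \eqref{equ1}.

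There is no serious obstacle here; the only subtlety worth flagging is that [N 4] is stated for a genuine sum of two triples, so one has to actually insert the trivial decompositions $x=(x-u)+u$ etc.\ rather than appeal to any one-variable triangle inequality, and the symmetry $\|-a,-b,-c\|=\|a,b,c\|$ must be drawn from [N 3] rather than assumed.
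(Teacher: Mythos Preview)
Your proof is correct and is exactly the intended argument: the paper itself simply records that ``the result follows directly from the definition of $G$-normed space,'' and what you have written is the standard unpacking of that remark via [N 4] and [N 3] with $\alpha=-1$.
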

\begin{proof}
The result follows directly from the definition of $G$-normed space.
\end{proof}

\begin{prop}
\label{prop2}
\emph{Let $(X,\|.,.,.\|)$ be a $G$-normed space. Then the function $G\colon X^3 \to \mathbb{R}^+$ defined by
\begin{equation}\
\label{equ4}
G(x,y,z)=\|x-y,y-z,z-x\|
\end{equation}
is a $G$-metric defined on $X$.}
\end{prop}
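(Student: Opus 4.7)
The plan is to verify the five axioms [G 1]--[G 5] for the function $G(x,y,z) = \|x-y, y-z, z-x\|$ by invoking the five $G$-norm axioms [N 1]--[N 5] one at a time.

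First I would dispose of the easier axioms. For [G 1], if $x=y=z$ all three arguments vanish, so $G(x,y,z)=\|0,0,0\|=0$ by [N 1]. For [G 2], $G(x,x,y)=\|0,x-y,y-x\|$; by [N 1] this equals zero only when $x-y=0$, giving strict positivity otherwise. For the symmetry axiom [G 4], cyclic permutations of $(x,y,z)$ induce cyclic permutations of the three slots of $\|\cdot,\cdot,\cdot\|$ and are covered by [N 2]. For a transposition such as $G(x,z,y)=\|x-z,z-y,y-x\|$, I would pull a factor of $-1$ out of each slot of $\|x-y,y-z,z-x\|$ using [N 3] with $\alpha=-1$ (so the scalar factor $|\alpha|=1$ disappears), obtain $\|y-x,z-y,x-z\|$, and then rearrange via [N 2].

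The two substantive steps are [G 3] and [G 5]. For [G 3] I need $\|0,x-y,y-x\|\le\|x-y,y-z,z-x\|$. The natural tool is [N 5], but it must be applied after first rearranging via [N 2] so that the two slots to be summed are adjacent and produce the desired difference; applying [N 5] to $\|y-z,z-x,x-y\|$ collapses the first two slots to $(y-z)+(z-x)=y-x$ with a zero in the middle, yielding $\|y-x,0,x-y\|$, which matches $\|0,x-y,y-x\|$ after one more permutation. For [G 5] the subadditivity [N 4] is the right tool: writing $x-y=(x-a)+(a-y)$, $y-z=0+(y-z)$, and $z-x=(z-a)+(a-x)$, [N 4] gives
\[
\|x-y,y-z,z-x\|\le\|x-a,0,a-x\|+\|a-y,y-z,z-a\|=G(x,a,a)+G(a,y,z).
\]

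I expect [G 3] to be the main obstacle, because unlike the other axioms it does not correspond transparently to a single norm axiom --- one must first pick the correct cyclic rearrangement via [N 2] in order to be in position to apply [N 5]. All the other steps amount to bookkeeping with [N 1]--[N 4].
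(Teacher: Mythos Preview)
Your proposal is correct and matches the paper's own proof essentially line for line: [G 1], [G 2] from [N 1]; [G 4] from [N 2] together with [N 3] at $\alpha=-1$; [G 3] by permuting via [N 2] and then collapsing with [N 5]; and [G 5] by the additive splitting and [N 4]. One cosmetic point: in your [G 5] decomposition you write $z-x=(z-a)+(a-x)$ but then group as if it were $(a-x)+(z-a)$ to land on $\|x-a,0,a-x\|+\|a-y,y-z,z-a\|$; since addition commutes this is harmless, but writing the third slot as $(a-x)+(z-a)$ from the start (as the paper does) would keep the bookkeeping clean.
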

\begin{proof}
We see that [G 1] follows from [N 1]. Also $G(x,x,y)=\|0,x-y,y-x\|\ >0$ for $x\neq y$.
 From [N 2] and [N 5] we have
\begin{equation*}
\|x-y,y-z,z-x\|=\|y-z,z-x,x-y\|\ge \|y-x,0,x-y\|=\|0,x-y,y-x\|
\end{equation*} 
Which gives $G(x,y,z)\ge G(x,x,y)$. Now [G 4] follows from [N 2] and [N 3]. Finally [G 5] holds as we see that for all $x,y,z,a\in X$, we have 
\begin{align*}
G(x,y,z)&=\|x-a+a-y,0+y-z, a-x+z-a\|\\
&\le \|x-a,0,a-x\|+\|a-y,y-z,z-a\|\\
&=G(x,a,a)+G(a,y,z).
\end{align*}
Hence the function $G$ thus defined is a $G$-metric and $(X,G)$ is a $G$-metric space.
\end{proof}
Thus every $G$-normed space $(X,\|.,.,.\|)$ will be considered to be a $G$-metric space. We have a well defined topology for a $G$-metric space. For $x_0\in X$, $r>0$, the $G$-ball is defined as the set $B_G(x_0,r)=\{y\in X\colon G(x_0,y,y)<r\}$. The family of all $G$-balls $\{B_G(x,r)\colon x\in X,r>0\}$ is the base of a topology $\tau(G)$ on $X$, called the $G$-metric topology. This $G$-metric topology coincides with the metric topology corresponding to the metric $d_G$ given by $d_G(x,y)=G(x,y,y)+G(x,x,y)$ (\cite{MU2}). Thus every $G$-metric space and hence every $G$-normed space is topologically equivalent to a metric space. Now we can transport concepts such as open balls, open subsets, closed subsets, closure etc from metric spaces into the $G$-normed spaces.
\subsection{Definition}\emph{Let $(X,\|.,.,.\|)$ be a $G$-normed space. For given $x_0\in X$, $e\in X$ and $r>0$, we define \textit{open ball} $B_e(x_0,r)$ to be a subset of $X$ given by
\begin{equation}
\label{equ5}
B_e(x_0,r)=\{y\in X\colon \:\|x_0-y,y-e,e-x_0\|<r\}
\end{equation} 
and the \textit{closed ball} $B_e[x_0,r]$ in $X$ as
\begin{equation}
\label{equ6}
B_e[x_0,r]=\{y\in X\colon \:\|x_0-y,y-e,e-x_0\|\le r\}
\end{equation}}
Substituting $y=ru+x_0$ in ~\eqref{equ5}, we have
\begin{equation*}
B_e(x_0,r)=x_0+r\{u\in X\colon \: \|u, e'-u,-e'\|<1\}
\end{equation*}
where $e'=(e-x_0)r^{-1}$.\\
Hence for $x_0=0$, we have
\begin{align*}
B_e(0,r)&=r\,\{u\in X\colon \: \|u-0,\,e'-u,\, 0-e'\|<1\}\\
&=rB_{e'}(0,1)
\end{align*}
\subsubsection{Example}
\emph{Let $(\mathbb{R}^2,\|.,.,.\|)$ be a $G$-normed space such that
\begin{equation*}
\|x,y,z\|=\|x\|+\|y\|+\|z\|
\end{equation*}
for all $x,y,z\in \mathbb{R}^2$. Where $\|x\|=\|(x_1,x_2)\|=\sqrt{x_1^2+x_2^2}$.\\
Then the open ball $B_e(x_0,r)$ in $\mathbb{R}^2$ will be an open elliptic disc given by
\begin{equation*}
B_e(x_0,r)=\{y\in \mathbb{R}^2\colon \: \|x_0-y\|+\|y-e\|<k\}
\end{equation*}
Where $k=r-\|e-x_0\|$.}\\
\indent Suppose hereafter that $X$ is a $G$-normed space. Now we introduce some definitions and propositions for further theory.
\subsection{Definition}
\emph{A subset $G\subseteq X$ is \textit{open} in $X$ if for each $x\in G$, there exist $e\in X$ and $r>0$ such that $B_e(x,r)\subseteq G$}. 
\subsection{Definition}
\emph{A set $D$ in a $G$-normed space $X$ is said to be \textit{dense} in $X$ when it intersects every open set.}
\subsection{Definition}
\emph{A sequence $<x_n>$ in $X$ is said to be \textit{convergent} if there exists an element $x\in X$ such that for given $\epsilon >0$, there exists a positive integer $N$ such that
\begin{equation*}
l,m,n\ge N\Rightarrow \|x_l-x,\: x_m-x,\: x_n-x\|< \epsilon 
\end{equation*} 
Or equivalently $n\ge N\Rightarrow \|x_n-x,\: x_n-x,\: x_n-x\|< \epsilon$.}
\subsection{Definition}
\emph{A sequence $<x_n>$ in $X$ is said to be a \textit{Cauchy sequence} if given $\epsilon >0$, there exists a positive integer $N$ such that}
\begin{equation*}
l,m,n\ge N\Rightarrow \|x_l-x_m,\: x_m-x_n,\: x_n-x_l\|< \epsilon 
\end{equation*} 
\subsection{Definition}
\emph{A $G$-normed space is said to be complete if each Cauchy sequence in $X$ converges in $X$.}
\subsection{Definition}
\emph{A complete $G$-normed space is called a \textit{$G$-Banach space}.}
\subsection{Definition}
\emph{The \textit{closure} of a subset $E\subseteq X$, denoted by $\overline{E}$, is the set of all $x\in X$ such that there exists a sequence $<x_n>$ in $E$ converging to $x$. We say that $E$ is \textit{closed} if $E=\overline {E}$.}
\subsection{Definition}
\emph{A subset $C$ of $X$ is called convex (resp. absolutely convex) if $\alpha C+\beta C\subseteq C$ for every $\alpha ,\beta >0$ (resp. $\alpha ,\beta \in \mathbb{R}$) with $\alpha + \beta =1$ (resp. $|\alpha |+|\beta |\le 1$).}

\begin{prop}

\emph{Every convergent sequence in a $G$-normed space $X$ has a unique limit.}
\end{prop}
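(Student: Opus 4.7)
The plan is the standard uniqueness-of-limit argument, adapted to the three-slot norm. Suppose the sequence $\langle x_n\rangle$ converges both to $x$ and to $x'$; my goal is to conclude $x=x'$. The key observation is that the quantity $\|x-x',\,x-x',\,x-x'\|$ does not depend on $n$, so if I can bound it by an arbitrary positive $\epsilon$ I will have shown that it equals $0$, and then axiom [N 1] will immediately force $x-x'=0$.

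To produce such a bound I would write $x-x'=(x-x_n)+(x_n-x')$ in each of the three slots and apply the triangle-type inequality [N 4] to split
\begin{equation*}
\|x-x',\,x-x',\,x-x'\|\;\le\;\|x-x_n,\,x-x_n,\,x-x_n\|+\|x_n-x',\,x_n-x',\,x_n-x'\|.
\end{equation*}
The second summand tends to $0$ by the (equivalent diagonal form of the) convergence definition applied to $x_n\to x'$. For the first summand I would first use [N 3] with $\alpha=-1$ to rewrite $\|x-x_n,\,x-x_n,\,x-x_n\|=\|x_n-x,\,x_n-x,\,x_n-x\|$, which tends to $0$ by convergence of $x_n$ to $x$. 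Choosing $N$ large enough so that each summand is smaller than $\epsilon/2$, the whole right-hand side falls below $\epsilon$, and arbitrariness of $\epsilon$ finishes the argument via [N 1].

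I do not anticipate a real obstacle here; the proof is a direct assembly of axioms [N 1], [N 3] and [N 4] plus the definition of convergence. The only minor subtlety worth flagging is the use of [N 3] to absorb the sign change when passing between $x-x_n$ and $x_n-x$, since the convergence hypothesis is stated in terms of $x_n-x$ rather than $x-x_n$; everything else is routine.
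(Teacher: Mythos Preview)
Your argument is correct and is exactly the kind of routine verification the paper has in mind; the paper itself offers no details beyond ``The proof is straightforward.'' Your use of [N~4] to split $\|x-x',x-x',x-x'\|$ and [N~3] to handle the sign, followed by [N~1], is the natural way to carry this out.
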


\begin{proof}
The proof is straightforward. 
\end{proof} 
\begin{prop}

\emph{Every convergent sequence in a $G$-normed space $X$ is a Cauchy sequence.}
\end{prop}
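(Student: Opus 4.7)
The plan is to use the convergence hypothesis to bound $\|x_l-x, x_m-x, x_n-x\|$ and then to ``telescope'' each pairwise difference $x_i - x_j$ through the limit $x$, reducing the Cauchy condition to one application of the subadditivity axiom [N 4].

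Let $\langle x_n \rangle$ converge to $x \in X$, and fix $\epsilon > 0$. First I would invoke the convergence definition (the three-index form) with $\epsilon/2$ in place of $\epsilon$, producing $N$ such that $\|x_l - x,\, x_m - x,\, x_n - x\| < \epsilon/2$ whenever $l,m,n \ge N$. Setting $a_n := x_n - x$ for brevity, the key algebraic observation is
\[
x_l - x_m = a_l + (-a_m), \qquad x_m - x_n = a_m + (-a_n), \qquad x_n - x_l = a_n + (-a_l).
\]

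Next I would apply [N 4] to the triple on the left of the Cauchy condition with these splittings, obtaining
\[
\|x_l - x_m,\, x_m - x_n,\, x_n - x_l\| \;\le\; \|a_l, a_m, a_n\| + \|-a_m, -a_n, -a_l\|.
\]
By [N 3] with $\alpha = -1$ the second term equals $\|a_m, a_n, a_l\|$, and by [N 2] this coincides with $\|a_l, a_m, a_n\|$. Hence the right-hand side is $2\|a_l, a_m, a_n\| < 2 \cdot (\epsilon/2) = \epsilon$, which is exactly the Cauchy criterion.

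The only subtlety (not really an obstacle, but the one place one can slip) is the bookkeeping when pairing the summands for [N 4]: the three pairs must be lined up so that the ``negative'' factors form a permutation of $(a_l, a_m, a_n)$, so that [N 2] collapses both contributions into a single norm. Everything else is routine and uses only axioms [N 2]--[N 4] of the $G$-norm.
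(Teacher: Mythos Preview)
Your argument is correct: the splitting $x_i-x_j=(x_i-x)+(-(x_j-x))$ together with [N~4], [N~3] (with $\alpha=-1$), and [N~2] is exactly the natural way to fill in the details, and the bookkeeping is handled properly. The paper itself gives no argument beyond the single line ``The result follows directly from the definitions,'' so you have simply supplied the routine verification that the author omitted; there is no alternative approach to compare against.
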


\begin{proof}
The result follows directly from the definitions.
\end{proof}

\begin{prop}
\label{prop3}
\emph{The ball $B_e(x_0,r)$ is open in $X$.}
\end{prop}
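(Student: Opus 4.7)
The plan is to take an arbitrary $y \in B_e(x_0,r)$ and exhibit a specific pair $(e',r')$ for which $B_{e'}(y,r') \subseteq B_e(x_0,r)$. Since by hypothesis $\|x_0-y,\,y-e,\,e-x_0\| < r$, the natural choice for the radius is
\[
s \;=\; r - \|x_0-y,\,y-e,\,e-x_0\| \;>\; 0.
\]
The less obvious choice is the auxiliary point $e'$: to be able to use [N 4] cleanly, I would take $e' = y$, so that the defining inequality for membership in $B_y(y,s)$ reads $\|y-z,\,z-y,\,0\| < s$.

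The key step is an algebraic identity that sets up a single application of the subadditivity axiom [N 4]. Writing
\[
x_0-z = (x_0-y)+(y-z), \quad z-e = (y-e)+(z-y), \quad e-x_0 = (e-x_0)+0,
\]
I would invoke [N 4] to obtain
\[
\|x_0-z,\,z-e,\,e-x_0\| \;\le\; \|x_0-y,\,y-e,\,e-x_0\| \;+\; \|y-z,\,z-y,\,0\|.
\]
Then for any $z \in B_y(y,s)$, the right-hand side is strictly less than $(r-s)+s = r$, which is exactly the assertion $z \in B_e(x_0,r)$.

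The main obstacle is not computational but conceptual: one has to notice that the definition of open ball carries a free parameter $e'$ at each center, and that choosing $e'=y$ (rather than, say, $e'=e$) kills the middle and third slots of the extra norm and lets the triangle-style inequality [N 4] close the argument in one line. Any other choice of $e'$ leaves a leftover term involving $e'$ itself that cannot be controlled purely by shrinking $s$. Once this choice is made, nothing beyond [N 4] is needed; axioms [N 2], [N 3], [N 5] do not enter the proof.
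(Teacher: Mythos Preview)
Your proof is correct, but it differs from the paper's in an interesting way. The paper keeps the \emph{same} auxiliary point $e'=e$ and instead changes the radius: for $z\in B_e(x_0,r)$ it sets $r_1=r-\|x_0-z,\,0,\,z-x_0\|$, invokes [N~5] to see that $r_1>0$, and then uses the decomposition
\[
\|x_0-e,\,e-u,\,u-x_0\|\le \|x_0-z,\,0,\,z-x_0\|+\|z-e,\,e-u,\,u-z\|
\]
to conclude $B_e(z,r_1)\subseteq B_e(x_0,r)$. Your choice $e'=y$ with $s=r-\|x_0-y,\,y-e,\,e-x_0\|$ is arguably cleaner: positivity of $s$ is immediate and the whole argument rests on a single use of [N~4], with [N~5] never entering. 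What the paper's route buys is that the containing ball lives in the same ``$e$-family'' as the original one. Your editorial remark that ``any other choice of $e'$ leaves a leftover term \ldots\ that cannot be controlled'' is therefore not accurate: the paper's choice $e'=e$ does close, it just needs [N~5] to certify that the radius is positive.
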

\begin{proof}
Let $z\in B_e(x_0,r)$. Then $\|x_0-e,e-z,z-x_0\|<r$.\\
Now using [N 5], we get
\begin{equation*}
r>\|x_0-e,e-z,z-x_0\|\ge \|x_0-z,0,z-x_0\|
\end{equation*}
Let $r_1=r-\|x_0-z,0,z-x_0\|$, then $r_1>0$. Now we shall show that $B_e(z,r_1)\subset B_e(x_0,r)$. Suppose $u\in B_e(z,r_1)$. Then we have $\|z-u,u-e,e-z\|<r_1$. Now
\begin{align*}
\|x_0-e,e-u,u-x_0\|&=\|x_0-z+z-e,e-u,z-x_0+u-z\|\\
&\le \|x_0-z,0,z-x_0\|+\|z-e,e-u,u-z\|\\
&<\|x_0-z,0,z-x_0\|+r_1=r\\
\Rightarrow \|x_0-e,e-u,u-x_0\|&<r
\end{align*}
Therefore $u\in B_e(x_0,r)$. Hence the result.
\end{proof}
\begin{prop}

\emph{For $a\in X$ and $r>0$, we have $\overline{B_e(a,r)}\subseteq B_e[a,r]$.}
\end{prop}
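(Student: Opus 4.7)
The plan is to unpack the definition of closure. If $x\in\overline{B_e(a,r)}$, then by Definition 2.9 there is a sequence $\langle x_n\rangle$ in $B_e(a,r)$ with $x_n\to x$, so every $x_n$ satisfies $\|a-x_n,\,x_n-e,\,e-a\|<r$. Showing $x\in B_e[a,r]$ then reduces to showing that this strict inequality survives in the limit as $\|a-x,\,x-e,\,e-a\|\le r$.

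For the passage to the limit I would invoke Proposition \ref{prop1} with the triples $(a-x_n,\,x_n-e,\,e-a)$ and $(a-x,\,x-e,\,e-a)$. Their componentwise differences are $(x-x_n,\,x_n-x,\,0)$, so \eqref{equ1} yields
\begin{equation*}
\bigl|\,\|a-x_n,\,x_n-e,\,e-a\|-\|a-x,\,x-e,\,e-a\|\,\bigr|\;\le\;\|x-x_n,\,x_n-x,\,0\|.
\end{equation*}
Once I know that the right-hand side tends to $0$, taking $n\to\infty$ in $\|a-x_n,x_n-e,e-a\|<r$ gives exactly $\|a-x,\,x-e,\,e-a\|\le r$, hence $x\in B_e[a,r]$.

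The one delicate point, and the main obstacle, is therefore to extract $\|x-x_n,\,x_n-x,\,0\|\to 0$ from the definition of convergence, which only directly provides $\|x_n-x,\,x_n-x,\,x_n-x\|\to 0$. My proposed route is purely axiomatic: first, split via [N 4] as $(x-x_n,\,x_n-x,\,0)=(x-x_n,0,0)+(0,x_n-x,0)$ and apply [N 3] with $\alpha=-1$ together with [N 2] to get $\|x-x_n,\,x_n-x,\,0\|\le 2\,\|x_n-x,\,0,\,0\|$; second, apply [N 5] twice (collapsing two coordinates at a time) to obtain $3\,\|x_n-x,\,0,\,0\|\le \|x_n-x,\,x_n-x,\,x_n-x\|$. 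Chaining these estimates gives $\|x-x_n,\,x_n-x,\,0\|\le \tfrac{2}{3}\|x_n-x,\,x_n-x,\,x_n-x\|\to 0$, which closes the argument. Everything else in the proof is bookkeeping that follows directly from the axioms.
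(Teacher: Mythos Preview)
Your argument is correct. The two chains of axiom applications you isolate do exactly what you claim: from [N~5] twice (with a permutation via [N~2] in between) one gets $\|u,u,u\|\ge\|2u,0,u\|\ge\|3u,0,0\|=3\|u,0,0\|$, and from [N~4], [N~2], [N~3] one gets $\|u,-u,0\|\le 2\|u,0,0\|$; together these give $\|x-x_n,\,x_n-x,\,0\|\le\tfrac{2}{3}\|x_n-x,\,x_n-x,\,x_n-x\|\to 0$, after which Proposition~\ref{prop1} lets you pass to the limit in the strict inequality.

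The paper argues differently, by contraposition. It fixes $y\notin B_e[a,r]$, sets $r_1=\|y-a,\,a-e,\,e-y\|>r$, and for any $x\in B_e(a,r)$ uses [N~4] to obtain
\[
\|y-x,\,0,\,x-y\|\;\ge\;\|y-a,\,a-e,\,e-y\|-\|x-a,\,a-e,\,e-x\|\;>\;r_1-r.
\]
Thus the ball $B_y(y,\,r_1-r)$ misses $B_e(a,r)$ entirely, so $y\notin\overline{B_e(a,r)}$. The underlying triangle-type estimate is the same one you get from Proposition~\ref{prop1}; the organization is what differs. Your sequential route has the merit of making explicit the ``delicate point'' you flagged---how the paper's definition of convergence (via $\|x_n-x,\,x_n-x,\,x_n-x\|$) controls quantities of the form $\|x_n-x,\,x-x_n,\,0\|$---which the paper's proof needs as well (to conclude that a sequence converging to $y$ eventually enters $B_y(y,\epsilon)$) but leaves implicit. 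The paper's contrapositive version, on the other hand, yields a quantitative separation: it produces an explicit radius $r_1-r$ for a ball around each exterior point disjoint from $B_e(a,r)$.
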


\begin{proof}
We prove the result by showing that $y \notin B_e[a,r] \Rightarrow y\notin \overline {B_e(a,r)} $.\\
 If $r_1=\|\,y-a,\: a-e, \: e-y\| $, then $r_1 >r$.\\
Let $x\in B_e(a,r)$. Then $\|x-a,a-e,e-x\| < r$. Now
\begin{align*}
\|\, y-a,\: a-e, \: e-y\|&=\|\, y-x+x-a,\:  a-e,\:  x-y+e-x\|\\
&\le \|y-x,0,x-y\|+ \|x-a, a-e, e-x\|\\
\Rightarrow \|\, y-x,\, 0,\, x-y\|& \ge \|\, y-a,\, a-e,\, e-y\|- \|\, x-a,\, a-e,\, e-x\|\\
& > r_1 -r >0
\end{align*}
Let $\epsilon =r_1-r$. Then $x \notin B_y(y,\epsilon )$. Hence there exists a neighborhood of $y$ which does not intersect $B_e(a,r)$, i.e. $y\notin \overline{B_e(a,r)}$. Hence the result.
\end{proof}
\begin{prop}

\emph{The balls $B_0(0,r)$ and $B_0[0,r]$ are absolutely convex for every $r > 0$.}
\end{prop}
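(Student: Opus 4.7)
The strategy is to unravel the definition of the balls centred at the origin and then apply the subadditivity [N 4] together with the homogeneity [N 3] of the $G$-norm.

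First I would observe that, by~\eqref{equ5} and~\eqref{equ6}, a point $y$ lies in $B_0(0,r)$ (respectively in $B_0[0,r]$) exactly when $\|{-y},y,0\| < r$ (respectively $\le r$). So the task reduces to the following: given $x,y\in X$ with $\|{-x},x,0\|<r$ and $\|{-y},y,0\|<r$, and scalars $\alpha,\beta\in\mathbb{R}$ with $|\alpha|+|\beta|\le 1$, I must show that $\|{-(\alpha x+\beta y)},\,\alpha x+\beta y,\,0\| < r$ (and analogously for the closed ball).

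The main step is to apply [N 4] with the splitting
\begin{equation*}
-(\alpha x+\beta y)=(-\alpha x)+(-\beta y),\qquad \alpha x+\beta y=(\alpha x)+(\beta y),\qquad 0=0+0,
\end{equation*}
which yields
\begin{equation*}
\|{-(\alpha x+\beta y)},\,\alpha x+\beta y,\,0\|\;\le\;\|{-\alpha x},\alpha x,0\|+\|{-\beta y},\beta y,0\|.
\end{equation*}
Writing $-\alpha x=\alpha(-x)$ and $0=\alpha\cdot 0$, an application of [N 3] converts $\|{-\alpha x},\alpha x,0\|$ into $|\alpha|\,\|{-x},x,0\|$, and similarly for the $\beta$-term. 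Combining this with the two ball hypotheses bounds the right-hand side by $(|\alpha|+|\beta|)\,r\le r$, which closes the open-ball case.

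I do not anticipate a serious obstacle: everything reduces to one use of [N 4] plus scalar homogeneity. The only subtlety is that [N 3] demands the \emph{same} scalar in all three slots, but this is accommodated by the rewriting $-\alpha x=\alpha(-x)$, $\alpha x=\alpha(x)$, $0=\alpha\cdot 0$, so the sign of $\alpha$ (or $\beta$) causes no difficulty. The closed-ball assertion is established by the identical chain of inequalities with each strict inequality replaced by a non-strict one.
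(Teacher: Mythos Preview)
Your proposal is correct and follows essentially the same route as the paper's own proof: unpack the membership condition for $B_0(0,r)$, apply [N 4] to split the convex combination, then use [N 3] to pull out $|\alpha|$ and $|\beta|$ and bound by $(|\alpha|+|\beta|)r\le r$. Your explicit remark that [N 3] requires the same scalar in all three slots (handled via $-\alpha x=\alpha(-x)$, $0=\alpha\cdot 0$) is a nice clarification that the paper leaves implicit.
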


\begin{proof}
Let $x,y \in B_0(0,r)$. Then $ \|\, x,\: -x, \: 0\, \|< r, \; \|\, y,\: -y, \: 0\, \|< r $. Let $\alpha ,\beta \in \mathbb{R} $ with $|\alpha |+|\beta|\le 1$. Then 
\begin{align*}
\|\, \alpha x+\beta y, \: -\alpha x-\beta y, \: 0\,\|& \le \|\, \alpha x, \: -\alpha x , \: 0\,\|+\|\, \beta y, \: -\beta y, \: 0\,\|\\
&=|\alpha |\,\|\,x, \, -x,\, 0\|+|\beta |\, \|\, y,\, -y,\, 0\|\\
&<(|\alpha |+|\beta |)\, r\\
&\le r
\end{align*}
This implies that $\alpha x+\beta y \in B_0(0,r)$. Hence $B_0(0,r)$ is absolutely convex. Similarly, we can show that the ball $B_0[0,r]$ is absolutely convex.
\end{proof}
\begin{prop}

\emph{The closure of a convex (resp. absolutely convex) subset of a $G$-normed space is convex (resp. absolutely convex).}
\end{prop}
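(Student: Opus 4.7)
The plan is to prove both statements in parallel by the standard sequential-closure argument: take a point in the closure, express it as the limit of a sequence in the set, form a convex (resp. absolutely convex) combination of two such approximating sequences, observe that this combination lies in the set by hypothesis, and show that it converges to the corresponding combination of the limits.

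More precisely, let $C \subseteq X$ be convex and let $x, y \in \overline{C}$ with $\alpha, \beta > 0$ and $\alpha + \beta = 1$. By the definition of closure, I would choose sequences $\langle x_n \rangle, \langle y_n \rangle$ in $C$ with $x_n \to x$ and $y_n \to y$, and set $z_n := \alpha x_n + \beta y_n$. Convexity of $C$ gives $z_n \in C$ for every $n$. It then suffices to show $z_n \to \alpha x + \beta y$, because then $\alpha x + \beta y \in \overline{C}$ by definition. The argument for the absolutely convex case is identical after replacing the hypothesis on $\alpha, \beta$ by $|\alpha| + |\beta| \le 1$ and noting that the absolute convexity of $C$ still forces $z_n \in C$.

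The convergence step is the only place where the $G$-norm axioms enter. Using the equivalent form of convergence, I need to bound $\|z_n - z,\, z_n - z,\, z_n - z\|$ where $z = \alpha x + \beta y$. Since $z_n - z = \alpha(x_n - x) + \beta(y_n - y)$, a single application of [N 4] (subadditivity in each slot) followed by [N 3] (homogeneity, which requires the common scalar $\alpha$ in all three slots, and likewise $\beta$) yields
\begin{equation*}
\|z_n - z, z_n - z, z_n - z\| \le |\alpha|\,\|x_n - x, x_n - x, x_n - x\| + |\beta|\,\|y_n - y, y_n - y, y_n - y\|.
\end{equation*}
Both terms on the right tend to $0$, so $z_n \to z$.

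The only subtle point — and what I would flag as the one place to be careful — is that [N 3] only permits pulling out a scalar when the same scalar multiplies all three slots. That is why I split the difference as $\alpha(x_n - x) + \beta(y_n - y)$ before applying [N 4], rather than trying to factor after the fact; this is also why putting the same vector in all three slots (as the equivalent convergence criterion allows) is convenient. Everything else is bookkeeping.
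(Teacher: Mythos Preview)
Your proof is correct and follows essentially the same sequential-closure argument as the paper: pick approximating sequences, form the convex combination inside $C$, and pass to the limit. The only difference is that you spell out the convergence $z_n \to \alpha x + \beta y$ explicitly via [N~3] and [N~4], whereas the paper simply writes $\alpha x + \beta y = \lim_n(\alpha x_n + \beta y_n)$ without justification (implicitly invoking continuity of the vector-space operations, which is in fact proved only in the subsequent theorem); in that sense your version is slightly more self-contained.
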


\begin{proof}
Let $X$ be a $G$-normed space. Let $A$ be a convex (resp. absolutely convex) subset of $X$. Let $x,y\in \overline{A} $. Then there exist sequences $<x_n>$ and $<y_n>$ in $A$ such that $x_n \rightarrow x$ and $y_n \rightarrow y$.\\
Let $\alpha, \beta >0$ (resp. $\alpha, \beta \in \mathbb{R}$) such that $\alpha +\beta =1$ (resp. $ |\alpha|+|\beta|\le 1$). Since $A$ is convex (resp. absolutely convex), $\alpha x_n+ \beta y_n \in A$ for all $n\in \mathbb{N} $. Now
\begin{equation*}
\alpha x+\beta y =\lim_{n\to \infty} (\alpha x_n+\beta y_n ) \in \overline{A}
\end{equation*} 
Hence $A$ is convex (resp. absolutely convex).
\end{proof}
\begin{thm}
\emph{Let $X$ be a $G$-normed space. Then the following maps are continuous:
\begin{itemize}
\item[(a)] Addition : $X\times X\to X$, \: $(x,y)\longmapsto x+y$ ,
\item[(b)] Scalar multiplication : $ \mathbb{R}\times X \to X$,\: $(\lambda ,x)\longmapsto \lambda x$ ,
\item[(c)] The $G$-norm : $X\times X\times X \to \mathbb{R}$, \: $(x,y,z)\longmapsto \|x,y,z\|$.
\end{itemize}}
\end{thm}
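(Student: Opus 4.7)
The $G$-metric topology on $X$ is metrizable via the metric $d_G$ noted after Proposition~\ref{prop2}, so $X$, $X\times X$, $\mathbb{R}\times X$ and $X^3$ are all first countable; it therefore suffices to verify sequential continuity in each case. The unifying strategy is to express every $G$-norm of a difference as a sum of triples whose norms are controlled by the individual coordinate convergences, via the subadditivity [N 4] and homogeneity [N 3].

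For part (a), assume $x_n\to x$ and $y_n\to y$. Writing the triple $\bigl((x_n+y_n)-(x+y),\,(x_n+y_n)-(x+y),\,(x_n+y_n)-(x+y)\bigr)$ as the sum of $(x_n-x,x_n-x,x_n-x)$ and $(y_n-y,y_n-y,y_n-y)$ and applying [N 4] gives the bound $\|x_n-x,x_n-x,x_n-x\|+\|y_n-y,y_n-y,y_n-y\|\to 0$.

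For part (b), assume $\lambda_n\to\lambda$ and $x_n\to x$. The identity $\lambda_n x_n-\lambda x=\lambda_n(x_n-x)+(\lambda_n-\lambda)x$ combined with [N 4] and [N 3] yields the bound $|\lambda_n|\,\|x_n-x,x_n-x,x_n-x\|+|\lambda_n-\lambda|\,\|x,x,x\|$, which tends to $0$ since $\{\lambda_n\}$ is bounded and $\|x,x,x\|$ is a fixed constant.

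For part (c), Proposition~\ref{prop1} reduces the task to showing $\|x_n-x,y_n-y,z_n-z\|\to 0$ whenever each coordinate sequence converges. Decomposing $(x_n-x,y_n-y,z_n-z)$ as the sum of three triples supported in a single slot and iterating [N 4] together with [N 2] gives
\[
\|x_n-x,y_n-y,z_n-z\|\le \|x_n-x,0,0\|+\|y_n-y,0,0\|+\|z_n-z,0,0\|.
\]
The one small obstacle is to pass from $\|u,u,u\|\to 0$ (the convergence hypothesis) to $\|u,0,0\|\to 0$. I will establish the identity $\|u,u,u\|=3\,\|u,0,0\|$: the upper bound comes from applying [N 4] twice and then [N 2] to the decomposition $(u,u,u)=(u,0,0)+(0,u,0)+(0,0,u)$, while the lower bound comes from iterating [N 5] together with [N 2] and [N 3], namely $\|u,u,u\|\ge \|2u,0,u\|=\|2u,u,0\|\ge\|3u,0,0\|=3\,\|u,0,0\|$. (Incidentally, this identity is what validates the ``equivalently'' clause in the definition of convergence.) Once this equivalence is in hand, all three continuity claims follow.
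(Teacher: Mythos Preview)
Your proof is correct and follows the same overall strategy as the paper: use [N\,4] (and [N\,3]) to bound the relevant norms by sums that the convergence hypotheses control, and invoke Proposition~\ref{prop1} for part~(c). The paper works with the three-index form of convergence in part~(a) while you use the repeated-index form, but these are declared equivalent in the definition, so the difference is cosmetic.

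Where your argument genuinely adds to the paper is in part~(c). The paper simply applies Proposition~\ref{prop1} and asserts $\|x_n-x,\,y_n-y,\,z_n-z\|\to 0$ without comment, even though the convergence hypotheses only directly give $\|x_n-x,\,x_n-x,\,x_n-x\|\to 0$ (and similarly for $y_n,z_n$). You correctly identify this as a step needing justification and supply it via the identity $\|u,u,u\|=3\,\|u,0,0\|$, whose proof from [N\,4], [N\,5], [N\,2], [N\,3] is clean and, as you note, is exactly what makes the ``equivalently'' clause in the definition of convergence legitimate. So your version is more complete than the paper's on this point.
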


\begin{proof}
We may regard $X$ and $\mathbb{R}$ as metric spaces. Let $<x_n>$, $<y_n>$ and $<z_n>$ be sequences in a $G$-normed space $X$ with $\lim x_n=x$, $\lim y_n=y$ and $\lim z_n=z$. Let $<a_n>$ be a sequence in $\mathbb{R}$ with $\lim a_n=a$.\\
(a) We have
\begin{align*}
\|x_l+y_l-(x+y), x_m+y_m-(x+y),x_n+y_n-(x+y)\|&\le \|x_l-x, x_m-x,x_n-x\|\\
&+\|y_l-y, y_m-y,y_n-y\|
\end{align*}
Hence $\lim_{l,m,n \to \infty} \|x_l+y_l-(x+y), x_m+y_m-(x+y),x_n+y_n-(x+y)\|=0$.\\
This proves the result.\\
(b) The result follows by similar arguments.\\
(c) Using the relation ~\eqref{equ1}, we have
\begin{equation*}
|\, \|x_n,y_n,z_n\|-\|x,y,z\|\, |\le \|\, x_n-x, y_n-y, z_n-z\, \|\\
\end{equation*}
Therefore $\lim_{n\to \infty} \|x_n,y_n,z_n\|=\|x,y,z\|$.
Hence the result.
\end{proof}
The following result holds for any topological vector space, hence we state it without proof.
\begin{prop}

\emph{The intersection of a finite number of dense open subsets of a $G$-normed space $X$ is dense in $X$.}
\end{prop}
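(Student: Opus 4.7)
The plan is to observe that the statement depends only on the topological structure of $X$, which has already been furnished by Proposition~\ref{prop2} via the associated $G$-metric. Neither the vector-space structure nor any property specific to the $G$-norm enters the argument. I would invoke the paper's definition of density (a set is dense when it meets every open set) together with the elementary fact that a finite intersection of open sets is open.

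First I would dispose of the two-set case. Let $D_1$ and $D_2$ be dense open subsets of $X$, and let $U$ be an arbitrary nonempty open set. Density of $D_1$ gives $U \cap D_1 \neq \emptyset$, and since $U$ and $D_1$ are both open, $U \cap D_1$ is itself a nonempty open set. Applying density of $D_2$ to this open set yields $(U \cap D_1) \cap D_2 \neq \emptyset$, that is, $U \cap (D_1 \cap D_2) \neq \emptyset$. Since $U$ was arbitrary, $D_1 \cap D_2$ meets every open set and so is dense; it is manifestly open.

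The general case then follows by straightforward induction on the number $n$ of sets: writing
\[
\bigcap_{i=1}^{n} D_i \;=\; \Bigl(\bigcap_{i=1}^{n-1} D_i\Bigr) \cap D_n,
\]
the inductive hypothesis makes the first factor dense and open, and the two-set case completes the step.

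The main obstacle—really the only subtlety worth flagging—is purely definitional: one should first confirm that the open sets defined via the balls $B_e(x_0,r)$ (Definition~2.3) form a topology in the usual sense, so that finite intersections of open sets are open and standard point-set reasoning is legitimate. This is ensured by Proposition~\ref{prop2} together with the remark identifying $\tau(G)$ with the metric topology of $d_G$. Once this is in place, the proof is a two-line topological exercise, which is no doubt why the author chose to state it without proof.
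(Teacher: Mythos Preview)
Your argument is correct. The paper does not prove this proposition at all: it simply remarks that the result holds for any topological vector space and states it without proof. Your standard point-set argument (two-set case plus induction) is exactly the kind of routine verification the author is skipping, and your closing observation anticipates this. If anything, you are slightly more general than the paper's justification, since your reasoning uses only the topology and never the vector-space structure; the result is true in any topological space, not just topological vector spaces.
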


\subsection{Definition}
\emph{A linear function $F$ from a $G$-normed space $(X,\|.,.,.\|_X)$ into a $G$-normed space $(Y,\|.,.,.\|_Y)$ is said to be bounded if there exists $K > 0$ such that 
\begin{equation}
\|\, F(x),\, F(y),\, F(z)\, \|_Y \le K\, \|\,x,\, y,\, z\, \|_X \; \: \forall \, x,y,z\in X
\end{equation}}
\subsection{Definition}
\emph{A linear function $F$ from a $G$-normed space $(X,\|.,.,.\|_X)$ into a $G$-normed space $(Y,\|.,.,.\|_Y)$ is continuous at $x_0\in X$ if there exists a sequence $<x_n>$ in $X$ such that
\begin{equation*}
x_n\to x_0 \Rightarrow F(x_n) \to F(x_0)
\end{equation*}
Equivalently, $F$ is continuous at a point $x_0$ if for given $z\in X$ and $\epsilon >0$ there exists $\delta >0$ and $u\in X$ such that $\|\, F(y)-F(x_0),\, F(x_0)-F(z),\, F(z)-F(y)\, \|< \epsilon $ for every $y\in X$ for which $\|\, y-x_0,\, x_0-u,\, u-y\|<\delta$.}\\
\indent $F$ is continuous if it is continuous at every point in $X$.
\begin{thm}

\emph{Every bounded linear function is continuous.}
\end{thm}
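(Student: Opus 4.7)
The plan is to verify the $\epsilon$--$\delta$ formulation of continuity given just before the theorem, at an arbitrary point $x_0\in X$. Fix $z\in X$ and $\epsilon>0$. The most natural choice of auxiliary point is $u=z$; with this choice the required implication becomes: find $\delta>0$ so that
\[
\|\,y-x_0,\; x_0-z,\; z-y\,\|_X<\delta \;\Longrightarrow\; \|\,F(y)-F(x_0),\; F(x_0)-F(z),\; F(z)-F(y)\,\|_Y<\epsilon.
\]

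The key manipulation is to invoke linearity of $F$ to rewrite each difference on the right as $F(y-x_0)$, $F(x_0-z)$, $F(z-y)$, and then apply the boundedness hypothesis to the triple $\bigl(y-x_0,\,x_0-z,\,z-y\bigr)$, yielding
\[
\|\,F(y-x_0),\; F(x_0-z),\; F(z-y)\,\|_Y \;\le\; K\,\|\,y-x_0,\; x_0-z,\; z-y\,\|_X.
\]
Taking $\delta=\epsilon/K$ then closes the implication, and since $x_0$ was arbitrary, continuity on $X$ follows.

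There is essentially no technical obstacle here: the boundedness estimate has been formulated in exactly the shape needed, and the choice $u=z$ aligns the arguments of the two $G$-norms so that linearity converts the three differences into $F$ applied to the three differences, after which boundedness is applied once. As a cross-check, the sequential version is equally clean: if $x_n\to x_0$, so that $\|x_n-x_0,\,x_n-x_0,\,x_n-x_0\|_X\to 0$, then linearity and boundedness give
\[
\|\,F(x_n)-F(x_0),\,F(x_n)-F(x_0),\,F(x_n)-F(x_0)\,\|_Y \;\le\; K\,\|\,x_n-x_0,\,x_n-x_0,\,x_n-x_0\,\|_X \to 0,
\]
so $F(x_n)\to F(x_0)$, confirming the conclusion by either route.
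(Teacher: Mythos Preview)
Your proof is correct and follows essentially the same route as the paper: apply linearity to convert the differences into $F$ applied to differences, invoke boundedness once, and take $\delta=\epsilon/K$. Your explicit choice $u=z$ makes transparent a step the paper leaves implicit, and your sequential cross-check is a pleasant addition, but the core argument is identical.
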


\begin{proof}
Let $F$ be a bounded linear function from a $G$-normed space $(X,\|.,.,.\|_X)$ into a $G$-normed space $(Y,\|.,.,.\|_Y)$. Then there exists $K > 0$ such that
\begin{equation*}
\|\, F(u),\, F(v),\, F(w)\, \|\le K\, \|\, u, \, v,\, w\, \| \; \; \forall \: u,v,w \in X
\end{equation*}
Hence for $x,y,z\in X$ and given $\epsilon >0$ there exists $\delta=\epsilon /K >0$ such that $\|\, F(y-x),\, F(x-z),\, F(z-x)\, \|< \epsilon$ whenever $ \|\, y-x, \, x-z,\, z-x\, \|<\delta $. Since $F$ is linear, hence $\|\, F(y)-F(x),\, F(x)-F(z),\, F(z)-F(x)\, \|< \epsilon$ whenever $ \|\, y-x, \, x-z,\, z-x\, \|<\delta $, i.e. $F$ is continuous.
\end{proof}
We now state and prove the famous Banach's fixed point theorem for $G$-normed spaces.
\begin{thm}
\label{Banach}
\emph{Let $X$ be a $G$-Banach space and $T \colon X \to X$ be a mapping satisfying the following condition for all $x,y,z \in X$ 
\begin{equation}
\label{equ7}
\|\, Tx-Ty,\, Ty-Tz, \, Tz-Tx\, \|\le k\, \|\,x-y,\,y-z,\,z-x\, \|
\end{equation}
Where $k \in [0,1)$. Then $T$ has a unique fixed point.}
\end{thm}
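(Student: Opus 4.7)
The natural approach is Picard iteration, mirroring the classical Banach contraction argument but adapted to the three-variable structure of the $G$-norm. Fix any $x_0 \in X$ and set $x_{n+1} = T x_n$. Specializing \eqref{equ7} to $x = x_{n-1}$ and $y = z = x_n$ yields
\[ \|x_n - x_{n+1},\, 0,\, x_{n+1} - x_n\| \le k\, \|x_{n-1} - x_n,\, 0,\, x_n - x_{n-1}\|, \]
so, by induction, each consecutive displacement decays geometrically: writing $a_n := \|x_n - x_{n+1},\, 0,\, x_{n+1} - x_n\|$, we have $a_n \le k^n a_0$.

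Next I would show $\{x_n\}$ is Cauchy in the sense defined earlier, i.e.\ $\|x_l - x_m,\, x_m - x_n,\, x_n - x_l\| \to 0$ as $l,m,n \to \infty$. The cleanest route passes through Proposition~\ref{prop2}: in the induced $G$-metric $G(x,y,z) = \|x-y,\, y-z,\, z-x\|$, the contraction \eqref{equ7} becomes $G(Tx, Ty, Tz) \le k\, G(x,y,z)$. Iterated use of [G 5] (with [G 3] and [G 4]) decomposes $G(x_l, x_m, x_n)$ into a sum of two-point terms of the form $G(x_i, x_{i+1}, x_{i+1})$, each of which is bounded by $k^i G(x_0, x_1, x_1)$ by the same one-step contraction argument; summing the geometric tail gives $G(x_l, x_m, x_n) \le k^N G(x_0, x_1, x_1)/(1-k)$ with $N = \min(l,m,n)$, which tends to $0$. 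Completeness then produces $x^* \in X$ with $x_n \to x^*$.

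To close the argument, I would use that $T$ is Lipschitz (hence continuous) by \eqref{equ7}, so $T x_n \to T x^*$; combined with $T x_n = x_{n+1} \to x^*$ and the already-established uniqueness of limits, this forces $T x^* = x^*$. Uniqueness of fixed points follows by feeding any two fixed points $x^*, y^*$ into \eqref{equ7} with $x = x^*$ and $y = z = y^*$: we obtain $(1-k)\, \|x^* - y^*,\, 0,\, y^* - x^*\| \le 0$, and since $k < 1$ property [N 1] forces $x^* = y^*$. The main obstacle is the Cauchy step, since unlike in an ordinary normed space the telescoping is not immediate: the quantity $\|x_l - x_m,\, x_m - x_n,\, x_n - x_l\|$ depends on three indices simultaneously and must be reduced to consecutive-index terms either directly via [N 4] or, more cleanly, through the rectangle inequality [G 5] on the induced $G$-metric.
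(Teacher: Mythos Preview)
Your proposal is correct and follows essentially the same Picard-iteration scheme as the paper: geometric decay of consecutive steps, a telescoping Cauchy estimate, completeness, and the standard uniqueness argument. The only cosmetic differences are that the paper carries out the Cauchy step directly via [N~4] (first bounding $\|x_n-x_m,\,x_m-x_n,\,0\|$ and then splitting the three-index quantity into two such terms) rather than through the induced $G$-metric and [G~5], and it verifies $Tu=u$ by an explicit estimate rather than appealing to continuity of $T$.
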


\begin{proof}
Let $T\colon X\to X$ be a mapping satisfying the condition ~\eqref{equ7}. Let $x_0\in X$ be an arbitrary point. Define a sequence $<x_n>$ by the relation $x_n=T^n x_0$, then by the given condition we have
\begin{equation*}
\|Tx_{n-1} -Tx_n,\, Tx_n-Tx_{n-1},\, 0\|< k\, \|x_{n-1}-x_n,\,x_n-x_{n-1},\,0\|
\end{equation*} 
or \qquad \quad    $\|x_n -x_{n+1},\, x_{n+1}-x_n,\, 0\|< k\, \|x_{n-1}-x_n,\,x_n-x_{n-1},\,0\|$.\\
Continuing the same argument, we have
\begin{equation}
\label{equ8}
\|x_n -x_{n+1},\, x_{n+1}-x_n,\, 0\|< k^n \, \|x_0-x_1,\,x_1-x_0,\,0\|
\end{equation}
For all natural numbers $n$ and $m (>n)$, by using [N 4] we have
\begin{align*}
\|x_n -x_m,\, x_m-x_n,\, 0\|\le \|x_n -x_{n+1},\, x_{n+1}-x_n,\, 0\|&+\|x_{n+1} -x_{n+2},\, x_{n+2}-x_{n+1},\, 0\|\\
& + \cdots +\|x_{m-1}-x_m,\, x_m-x_{m-1},\, 0\|
\end{align*}
Since $0\le k<1$, hence on using the relation~\eqref{equ8}, we get
\begin{align*}
\|x_n -x_m,\, x_m-x_n,\, 0\|&\le (k^n+k^{n+1}+ \cdots +k^{m-1})\,\|x_0-x_1,\,x_1-x_0,\,0\|\\
&\le \frac{k^n}{1-k}\,  \|x_0-x_1,\,x_1-x_0,\,0\| 
\end{align*}
This yields $\|x_n -x_m,\, x_m-x_n,\, 0\|\to 0$ as $m,n\to \infty$. Now
\begin{align*}
\|x_l -x_m,\, x_m-x_n,\, x_n-x_l\|&=\|x_l -x_m,\, x_m-x_l+x_l-x_n,\, x_n-x_l\|\\
&\le \,\|x_l -x_m,\, x_m-x_l,\, 0\|+\|0,\,x_l-x_n,\, x_n-x_l\|
\end{align*}
Therefore $\lim_{l,m,n\to \infty} \|x_l -x_m,\, x_m-x_n,\, x_n-x_l\|=0$ and hence $<x_n>$ is a Cauchy sequence. Since $X$ is complete, there exists $u\in X$ such that $x_n\to u$.\\
Suppose that $Tu\neq u$, then 
\begin{align*}
\|Tu-u,\, u-Tu,\, 0\|& \le \|Tu-Tx_n,\, Tx_n-Tu,\, Tu-Tu\|+\|Tx_n-u,\, u-Tx_n,\,0\|\\
&\le k\,\|u-x_n,\, x_n-u,\,0\|+\|x_{n+1}-u,\, u-x_{n+1},\, 0\|   
\end{align*}
 Taking the limits as $n\to \infty$ and using the fact that the $G$-norm is a continuous function of its variables, we observe that LHS is independent of $n$ and RHS tends to zero. Hence we must have $Tu=u$.\\
 For uniqueness of $u$, suppose that $v\neq u$ is such that $Tv=v$. Then we have
 \begin{equation*}
 \|v-u,\, 0,\,u-v\|=\|Tv-Tu,\, 0,\,Tu-Tv \|\le k\|v-u,\,0,\,u-v\|
 \end{equation*}
Which yields a contradiction as $0\le k <1$. Hence we have $u=v$.
\end{proof}
\subsection{Definition} \emph{Let $X$ be a $G$-normed space and $T$ be a self mapping on $X$. Then $T$ is called expansive mapping if there exists a constant $q>1$ such that for all $x,y,z \in X$, we have}
\begin{equation}
\label{equ9}
\|\, Tx-Ty,\, Ty-Tz, \, Tz-Tx\, \|\ge q\, \|\,x-y,\,y-z,\,z-x\, \|.
\end{equation}
\begin{thm}
\emph{Let $T$ be a linear surjective self mapping on a $G$-Banach space $X$ satisfying the condition~\eqref{equ9}. Then $T$ has a unique fixed point.}
\end{thm}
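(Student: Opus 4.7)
The plan is to reduce this to the Banach fixed point theorem (Theorem \ref{Banach}) applied to $T^{-1}$, which I expect to be a contraction on $X$ with ratio $1/q \in [0,1)$.

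First I would verify that $T$ is injective. Suppose $Tx = Ty$. Specializing the expansive inequality \eqref{equ9} with $z = y$ gives
\begin{equation*}
\|Tx - Ty,\, 0,\, Ty - Tx\| \ge q\,\|x-y,\, 0,\, y-x\|.
\end{equation*}
The left side vanishes, so $\|x-y, 0, y-x\| = 0$, and [N 1] forces $x = y$. Combined with the assumed surjectivity, this makes $T$ a linear bijection, so the inverse map $T^{-1} \colon X \to X$ exists and is linear.

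Next I would show $T^{-1}$ is a Banach contraction. For any $a,b,c \in X$, set $x = T^{-1}a$, $y = T^{-1}b$, $z = T^{-1}c$. Substituting into \eqref{equ9} and dividing by $q$ gives
\begin{equation*}
\|T^{-1}a - T^{-1}b,\, T^{-1}b - T^{-1}c,\, T^{-1}c - T^{-1}a\| \le \tfrac{1}{q}\,\|a-b,\, b-c,\, c-a\|,
\end{equation*}
which is exactly the contraction hypothesis \eqref{equ7} of Theorem \ref{Banach} with $k = 1/q \in [0,1)$.

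Since $X$ is a $G$-Banach space, Theorem \ref{Banach} applies to $T^{-1}$ and yields a unique $u \in X$ with $T^{-1}u = u$. Applying $T$ to both sides gives $u = Tu$, so $u$ is a fixed point of $T$. For uniqueness, any $v$ with $Tv = v$ also satisfies $T^{-1}v = v$, so $v = u$ by the uniqueness clause of Theorem \ref{Banach}. I do not anticipate a real obstacle; the only step demanding care is the injectivity argument, since one must choose the triple in \eqref{equ9} so that the resulting left-hand side actually collapses to zero and [N 1] can be invoked.
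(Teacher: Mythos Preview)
Your proposal is correct and follows essentially the same route as the paper: prove injectivity from \eqref{equ9}, invert $T$, verify the inverse is a contraction with ratio $1/q$, and invoke Theorem~\ref{Banach}. The only cosmetic differences are that the paper names the inverse $S$ and phrases the uniqueness step via $S(Tv)=Tv$ rather than your cleaner $T^{-1}v=v$.
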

\begin{proof}
First we see that $T$ is invertible, for if $Tx=Ty$, taking $x,y,y$ for $x,y,z$, condition~\eqref{equ9} gives $x=y$, i.e. $T$ is injective and hence invertible.\\
Let $S$ be the inverse mapping of $T$. Then $S$ is linear and
\begin{equation*}
\|\,x-y,\,y-z,\,z-x\, \|=\|\,TS(x-y),\,TS(y-z),\,TS(z-x)\, \|\ge q\, \|\,Sx-Sy,\,Sy-Sz,\,Sz-Sx\, \|
\end{equation*}
or $\|\,Sx-Sy,\,Sy-Sz,\,Sz-Sx\, \|\le k\,\|\,x-y,\,y-z,\,z-x\, \|$ where $k=\frac{1}{q}$.\\
Hence by Theorem 2.17. the mapping $S$ has a unique fixed point $u\in X$ such that $Su=u$. Now $u=(TS)u=T(Su)=Tu$. Thus $u$ is also a fixed point of $T$.\\
If there exists some $v\neq u$ such that $Tv=v$, then $Tv=v=(TS)v=(ST)v=S(Tv)$, i.e. $Tv$ is another fixed point of $S$. By uniqueness of fixed point for $S$ we conclude that $u=Tv=v$, i.e. $u$ is a fixed point of $T$. 
\end{proof} 
\begin{thm}
\emph{Let $X$ be a $G$-Banach space and let $T$ and $S$ be self mappings on $X$ satisfying the following conditions:
\begin{enumerate}
\item $T(X)\subseteq S(X)$,
\item $S$ is continuous,
\item $\|Tx-Ty,\,Ty-Tz,\, Tz-Tx\|\le q\,\|Sx-Sy,\, Sy-Sz,\, Sz-Sx\|$ for every $x,y,z\in X$ and $0<q<1$.
\end{enumerate}
Then $T$ and $S$ have a unique common fixed in $X$ provided $T$ and $S$ commute.}
\end{thm}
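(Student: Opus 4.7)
The plan is to adapt the classical Jungck common fixed point argument to the $G$-norm setting, using Theorem~\ref{Banach} as a template. First, I would use the inclusion $T(X)\subseteq S(X)$ to build recursively a sequence $(x_n)$ with $Sx_{n+1}=Tx_n$, and set $y_n := Sx_{n+1} = Tx_n$. Applying condition~(3) at the triple $(x_{n-1},x_n,x_n)$ and reading it modulo the permutation invariance [N~2] contracts successive $y$'s by a factor $q$, so by induction
$\|y_n-y_{n+1},\,y_{n+1}-y_n,\,0\| \le q^n \|y_0-y_1,\,y_1-y_0,\,0\|$.

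From here the Cauchy property of $(y_n)$ follows by the same telescoping-plus-geometric-series argument as in the proof of Theorem~\ref{Banach}: one first bounds $\|y_l-y_m,\,y_m-y_n,\,0\|$ using [N~4], and then upgrades to the full $\|y_l-y_m,\,y_m-y_n,\,y_n-y_l\|$ via the splitting trick already displayed in that proof. Completeness of $X$ then produces $u\in X$ with $Tx_n\to u$ and $Sx_n\to u$.

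The pivotal step, which I expect to be the main obstacle, is the identification of $u$ as a common fixed point of $T$ and $S$. Here both hypotheses~(2) and~(4) come into play. Continuity of $S$ applied to $Sx_n\to u$ and $Tx_n\to u$ yields $S^2 x_n\to Su$ and $STx_n\to Su$; commutativity then gives $TSx_n=STx_n\to Su$. Feeding the triple $(Sx_n,u,u)$ into condition~(3) and letting $n\to\infty$ shows $TSx_n\to Tu$, which combined with $TSx_n\to Su$ forces $Tu=Su$. A second application of~(3) with the triple $(x_n,u,u)$, letting $n\to\infty$ and invoking continuity of the $G$-norm in its arguments, produces the self-contracting inequality $\|u-Tu,\,0,\,Tu-u\| \le q\,\|u-Tu,\,0,\,Tu-u\|$ (using $Su=Tu$ on the right-hand side), and $0<q<1$ forces $u=Tu=Su$.

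Uniqueness is then immediate from~(3): any other common fixed point $v$ satisfies $\|u-v,\,0,\,v-u\| \le q\,\|u-v,\,0,\,v-u\|$, so $u=v$. The delicate part of the whole argument will be the bookkeeping of the three $G$-norm slots, choosing where to place the $0$ and invoking [N~2] so that the final two displays collapse cleanly into self-contractions on the same scalar quantity; without both continuity of $S$ and the commutation $TS=ST$, the chain of identifications that yields $Tu=Su=u$ breaks down.
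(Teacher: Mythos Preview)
Your proposal is correct and follows essentially the same Jungck-type argument as the paper: build the sequence $y_n=Tx_n=Sx_{n+1}$, contract by~$q^n$, sum a geometric series to get Cauchy, pass to the limit via continuity of $S$ and the commutation $TS=ST$, and finish with the self-contracting inequality for uniqueness. The only cosmetic difference is in the endgame: the paper plugs the triple $(Sx_n,x_n,x_n)$ into~(3) to obtain $Su=u$ directly and then $(x_n,u,u)$ to get $Tu=u$, whereas you first deduce $Tu=Su$ from $(Sx_n,u,u)$ and then collapse $(x_n,u,u)$ to $u=Tu$; both orderings work.
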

\begin{proof}
Let $x_0$ be an arbitrary point in $X$. Since $T(X)\subseteq S(X)$ hence there exists a point $x_1$ such that $Tx_0=Sx_1$. In general we can choose $x_{n+1}$ such that $y_n=Tx_n=Sx_{n+1}$. From (3) we have
\begin{align*}
\|Tx_n-Tx_{n+1},\, Tx_{n+1}-Tx_n,\,Tx_n-Tx_n\|&\le q\, \|Sx_n-Sx_{n+1},\, Sx_{n+1}-Sx_n,\, Sx_n-Sx_n\|\\
&=q\, \|Tx_{n-1}-Tx_n,\, Tx_n-Tx_{n-1},\, 0\|
\end{align*}
Proceeding in above manner we have
\begin{align*}
\|Tx_n-Tx_{n+1},\, Tx_{n+1}-Tx_n,\,Tx_n-Tx_n\|&\le q^n \, \|Tx_0-Tx_1,\,Tx_1-Tx_0,\,0\| \\
\Rightarrow \|y_n-y_{n+1},\, y_{n+1}-y_n,\, 0\|&\le q^n\, \|y_0-y_1,\, y_1-y_0, \,0\|
\end{align*}
Hence for all natural numbers $n$ and $m(>n)$, it can be shown that
\begin{equation*}
\|y_n-y_m,\, y_m-y_n,\, 0\|\le \frac{q^n}{1-q}\,\|y_0-y_1,\, y_1-y_0, \,0\| 
\end{equation*}
This yields $\|y_n -y_m,\, y_m-y_n,\, 0\|\to 0$ as $m,n\to \infty$. Now
\begin{equation*}
\|y_l -y_m,\, y_m-y_n,\, y_n-y_l\|\le \,\|y_l -y_m,\, y_m-y_l,\, 0\|+\|0,\,y_l-y_n,\, y_n-y_l\|\rightarrow 0
\end{equation*}
as $l,m,n\to \infty$. Hence $<y_n>$ is a Cauchy sequence. Since $X$ is complete, there exists $u\in X$ such that $y_n\to u$. Since $y_n=Tx_n=Sx_{n+1}$, hence we have $\lim_{n\to \infty}y_n=\lim_{n\to \infty}Sx_n=\lim_{n\to \infty}Tx_n=u$. Now $S$ is continuous hence
\begin{equation*}
\lim_{n\to \infty}SSx_n=\lim_{n\to \infty}STx_n=Su
\end{equation*}
Also $T$ and $S$ commute, therefore
\begin{equation*}
\lim_{n\to \infty}TSx_n=\lim_{n\to \infty}STx_n=\lim_{n\to \infty}SSx_n=Su
\end{equation*}
Taking $x=Sx_n$, $y=x_n$ and $z=x_n$ in (3) we have
\begin{equation*}
\|TSx_n-Tx_n,\,0,\, Tx_n-TSx_n\|\le q\, \|SSx_n-Sx_n,\,0,\,Sx_n-SSx_n\| 
\end{equation*}
Making $n\to \infty$, we have $\|Su-u,\,0,\,u-Su\|\le q\, \|Su-u,\,0,\,u-Su\|$. Which gives $Su=u$. For otherwise $q\ge 1$ contradicting the fact that $0<q<1$.\\
Similarly on taking $x=x_n$, $y=u$ and $z=u$ in (3) and making $n\to \infty$, we have $Tu=u$. Therefore $Tu=Su=u$, i.e. $u$ is a common fixed point of $T$ and $S$.\\
For uniqueness of $u$, suppose that $v\neq u$ is such that $Tv=Sv=v$. Then we have 
\begin{equation*}
\|u-v,\,0,\,v-u\|=\|Tu-Tv,\,0,\,Tv-Tu\|\le q\, \|Su-Sv,\,0,\,Sv-Su\|<\|u-v,\,0,\,v-u\|
\end{equation*}
 
Thus we get a contradiction, hence we have $u=v$

\end{proof}

\end{document}